\pgfplotsset{compat=1.17}
\pgfplotsset{plot coordinates/math parser=false}
\newcommand{\R}{\mathbb{R}}
\newcommand{\Z}{\mathbb{Z}}
\newcommand{\minmod}{\operatorname{minmod}}
\newcommand{\CFL}{\operatorname{CFL}}
\newcommand{\sign}{\operatorname{sign}}
\newcommand{\Rmn}[1]{\Romanbar #1}
\newcommand{\rmn}[1]{\text{\tiny \Romanbar{#1}}}
\providecommand{\keywords}[1]{\textit{Keywords:} #1}
\providecommand{\msc}[1]{\textit{2010 MSC:} #1}
\newtheorem{theorem}{Theorem}[section]
\newtheorem{proposition}[theorem]{Proposition}
\newtheorem{remark}[theorem]{Remark}
\begin{document}
\date{
  \small
  Institute of Geometry and Applied Mathematics,\\ RWTH Aachen University, Templergraben 55,\\ 52062 Aachen, Germany\\
   \smallskip
   {\tt  \{beckers,kolbe\}@igpm.rwth-aachen.de} \\
   \smallskip
   \today
  }

\title{The Lax--Friedrichs method in one-dimensional hemodynamics}
\author{Anika Beckers \and Niklas Kolbe}

\maketitle

\begin{abstract}
  The discretization of reduced one-dimensional hyperbolic models of blood flow using the Lax--Friedrichs method is discussed. Employing the well-established central scheme in this domain significantly simplifies the implementation of specific boundary and coupling conditions in vascular networks accounting e.g.\ for a periodic heart beat, vascular occlusions, stented vessel segments and bifurcations. In particular, the coupling of system extensions modeling patient specific geometries and therapies can be realized without information on the eigenstructure of the models. For the derivation of the scheme and the coupling conditions a relaxation of the model is considered and its discrete relaxation limit evaluated. Moreover, a second order MUSCL-type extensions of the scheme is introduced. Numerical experiments in uncoupled and coupled cases that verify the consistency and convergence of the approach are presented.
  
  \bigskip
  \noindent \keywords{blood flow modeling; cardiovascular networks; finite volumes; coupled conservation laws; boundary conditions; hyperbolic systems}\\
  \msc{35L65, 35R02, 00A71, 62P10}
  \end{abstract}

  \section{Introduction}
  Over the last decades the number of cardiovascular disease cases in Europe has significantly increased; it nowadays accounts for 45 \% of deaths in Europe~\cite{wilkins2017europcardiovdiseasstatis}.
  Among the concerned medical conditions stroke has been found to be responsible for most of the fatalities \cite{saini2021globalepidemstrok}. Surgical treatments such as stent placements, coronary artery bypass grafting and endovascular thrombectomy relies on information about the patient specific hemodynamics, which accounts for the blood flow through the vasculature and the fluid-structure interaction with the vessel walls, see e.g.\ \cite{balossino2008effec,forti2019transopticmonit}.  

  Computational models have been shown to be useful tools for treatment development and operation planning, see \cite{crosetto2011fluid, neidlin2016investhemo} and the references therein. While full-scale models in three space dimensions have been well-established \cite{nobile2008robin} these models suffer from complexity and long run-times hampering clinical application. Reduced one-dimensional models based on simplifying assumptions on geometry and flow profiles, blood flow and interactions with the vessel wall have offered an efficient alternative~\cite{hughes1973,formaggia2003}. Those have lately been increasingly used in applications ranging from uncertainty quantification in \cite{fleeter2020multil} to simulating clinical interventions using intravascular catheters in \cite{benemerito2023,pradhan2024}.

  Employing these mostly hyperbolic models in real-time applications requires efficient numerical schemes. In the literature simulations have been carried out by Taylor--Galerkin, cf.\ \cite{donea1984time}, or Godunov-type finite volume schemes, see e.g., \cite{melis2019improved,xiaofei2015verif,formaggia2006,peiro2009reduc}. This work is concerned with the application of the Lax--Friedrichs method to cardiovascular models in one space dimension. The Lax--Friedrichs method introduced in \cite{friedrichs1971system} belongs to the class of central schemes, cf.\ \cite{kurganov2000newhighresol}, and has been a popular choice for the solution of hyperbolic problems. Its main advantage is its universality, which is due to the fact that it is not tied to the eigenstructure of the discretized problem. In the context of cardiovascular models this allows for a new simplified handling of domain boundaries and coupling conditions at vascular junctions in a network, which in Godunov-type schemes rely on the Lax-curves corresponding to the model. It facilitates the coupling, in particular, in model extensions accounting for blood solutes and endoscopic therapy for which Lax-curves might not be available. While this work introduces the general application and the correct handling of boundaries and network nodes, the companion paper \cite{herty2024} applies the approach to the modeling of aspiration therapy.     

Typically, boundary conditions for the reduced one-dimensional models are based on the characteristic variables of the model and requires the extrapolation of the outgoing invariants and additional rules for the ingoing invariant \cite{formaggia2006,peiro2009reduc}. Conditions for the coupling of vessels at a junctions have been modeled in \cite{formaggia2003}. We also mention the recent approach from \cite{lucca2023}, in which the coupling has been modeled in more detail using a higher-dimensional sub-model. The coupling in our work and the derivation of the scheme will follow the approach from  \cite{herty2023centrschemtwo,herty2023centr} and employs the relaxation introduced in \cite{jin1995relaxschemsystem} for coupled hyperbolic systems. Discretizing the relaxed system using an implicit-explicit asymptotic preserving scheme recovers the Lax--Friedrichs scheme in the relaxation limit. Among others our approach allows us to couple the system for different velocity profiles encoded in different momentum-flux correction coefficients and to handle non-standard pressure models accounting for the viscoelasticity of the vessel wall.

The rest of the paper is structured as follows. In Section \ref{sec:model} we recall the reduced modeling of blood flow in a straight elastic vessel and thereby discuss two hyperbolic systems in different conserved variables. In Section~\ref{sec: LF} we consider the Xin-Jin-type relaxation of the blood flow models and use it to derive a Lax--Friedrichs type method. Also, a second-order extension using the MUSCL scheme is introduced, and an approximation is proposed that accounts for the diffusive term in the viscoelasticity considering model extension. 
In Section~\ref{sec:BC} we construct the correct boundary conditions for our approach. These do not require the extrapolation of the characteristic variables and can easily be extended to a higher order accuracy. Application specific boundary conditions result from imposing one of the key quantities, e.g.\ the pressure or the blood velocity, at the boundary.
Section~\ref{sec:cpl} progresses from the previously considered straight vessel to a vascular network by coupling various such vessels. A one-to-one coupling, which can deal with discontinuities in the vessel properties, and bifurcations in the vascular network are considered. The coupling conditions from \cite{formaggia2003} are adjusted for an application within the presented scheme, similar to the handling of the boundary.
Finally, in Section~\ref{sec:exp} numerical experiments are presented. Along with simulations of blood flow assuming entering pulse waves in the uncoupled and coupled case we verify our approach using grid convergence studies of our schemes with regard to the global spatial error and the coupling error at the interface.

\section{Modeling of blood flow in an elastic vessel}\label{sec:model}
In this section we discuss reduced one-dimensional models for blood flow in a single large vessel; vascular networks can be accounted for by coupling various straight parts, see e.g.,~\cite{formaggia2003}. The considered vessel segment is assumed cylindrical, and the flow is described by an average axial velocity component. This allows for a model with state variables only depending on the time $t$ and a single spatial variable $x$ parameterizing the axis of the vessel. The model is obtained describing blood as a Newtonian fluid and averaging the compressible Navier--Stokes equations over the cross-sections of the elastic cylindrical domain, see e.g.,~\cite{hughes1973} for details.

Denoting the cross-section area by $A=A(x,t)$ and the mass flux by $Q=Q(x,t)$, the model consists of the following two equations, accounting for mass conservation and momentum balance.
\begin{equation}\label{eq:systemQ}
	\begin{aligned}
		&\frac{\partial A}{\partial t} + \frac{\partial Q}{\partial x} = 0, \\
		& \frac{\partial Q}{\partial t} + \frac{\partial}{\partial x} \bigl(\alpha \frac{Q^2}{A} \bigr) + \frac{A}{\rho} \frac{\partial p}{\partial x} = S_v(A, Q).  &
	\end{aligned}
\end{equation}
In the second equation $p=p(x, t)$ denotes the space and time dependent pressure and $\rho$ the constant blood density.
The parameter $\alpha$ is the momentum-flux correction coefficient relating the averaged momentum to the actual momentum \cite{formaggia1999multismodelcirculsystem}, and $S_v$ is a source term accounting for the viscosity of blood. Both depend on the velocity profile within the cross-section; if we assume the profile to be independent of the axial position and radially symmetric, i.e. such that the radial component of the velocity denoted by $v$ depends only on the radial position $r$, we obtain
\begin{equation}
  S_v(A, Q) = 2 \mu R \left[ \frac{\partial v}{\partial r} \right]_{r=R}, \qquad R = \sqrt{\frac{A}{\pi}}
\end{equation}
with $\mu$ referring to the dynamic viscosity.

Commonly, Hagen--Poiseuille flow \cite{white1991viscous} is assumed with the velocity profile
\begin{equation}\label{eq:hpflow}
  v = u \frac{\psi + 2}{\psi} \left( 1 - \frac{r^\psi}{R^\psi}\right),
\end{equation}
where $u$ denotes the average axial velocity. Under this assumption the momentum-flux correction coefficient satisfies $\psi = (2-\alpha)/(\alpha -1)$. The choice $\psi=9$ (and thus $\alpha=1.1$) has been found to fit well to experimental data in \cite{smith2002}.

\paragraph{Vascular pressure.} The pressure exerted on the vessel is dominated by forces due to the vessel wall displacement \cite{peiro2009reduc}. This is reflected in the algebraic pressure law
\begin{equation}\label{eq:commonpressure}
	p  = P_{\text{ext}} + \beta (\sqrt{A}- \sqrt{A_0}),
\end{equation}
where $P_{\text{ext}}$ is a constant external pressure and $\beta= \frac{\sqrt{\pi}h_0 E}{(1-\nu^2)A_0}$ with $A_0$ denoting the reference section area and $h_0$ the wall thickness, respectively. Moreover, $\nu$ is the Poisson ratio and $E$ is the Young modulus corresponding to the vessel. A typical choice for the Poisson ratio is $\nu=0.5$, reflecting the incompressibility of the wall tissue \cite{formaggia2003}.
The pressure law \eqref{eq:commonpressure} assumes that the wall reacts immediately to compressive forces at the appropriate location. Additionally taking into account the viscoelasticity of the vessel wall by adopting a Voigt–Kelvin model~\cite{fung1994biomec} gives rise to the pressure
\begin{equation}\label{eq:pressurelong}
	\tilde p= P_{\text{ext}}  + \beta (\sqrt{A}- \sqrt{A_0}) + \frac{\gamma \sqrt{\pi}}{2\sqrt{A_0}^3} \frac{\partial A}{\partial t} 
\end{equation}
with $\gamma$ being the viscoelasticity coefficient. Assuming that $A_0$ and $\gamma$ are constant we can employ the first equation in \eqref{eq:systemQ} to rewrite the last term in \eqref{eq:pressurelong} and obtain a parabolic model, in which the term
\begin{equation}\label{eq:parabolicterm}
  \frac{\gamma \sqrt{\pi} A}{2 \rho \sqrt{A_0}^3} \frac{\partial^2 Q}{\partial x^2} 
\end{equation}
is added to the right-hand side of the second equation in \eqref{eq:systemQ}.

\paragraph{Velocity form.} As it holds $Q= Au$ for the mass flow basic algebra can be used to replace the \emph{flow form} \eqref{eq:systemQ} by rewriting the model in the \emph{velocity form}
\begin{equation}\label{eq:systemu}
	\begin{split}
          &\frac{\partial A}{\partial t} + \frac{\partial Au}{\partial x} = 0, \\
          & \frac{\partial u}{\partial t} + (2\alpha-1) u \frac{\partial u}{\partial x} + (\alpha-1) \frac{u^2}{A} \frac{\partial A}{\partial x} + \frac{1}{\rho} \frac{\partial p}{\partial x}  = \frac{1}{A} S_v(A, Au).
	\end{split}
\end{equation}
This model formulation has been particularly useful for the design of coupling conditions on a network, see e.g., \cite{formaggia2003}. While in the following we will base our study on the flow form~\eqref{eq:systemQ} implications for the velocity form \eqref{eq:systemu} will be discussed as well.

\paragraph{Conservative form.} Being derived from basic principles, model \eqref{eq:systemQ} admits a conservative form\footnote{Our notion of conservative systems here neglects the viscous source term $S_v$.}. This is made clear, using the antiderivative of the pressure function \eqref{eq:commonpressure}, i.e.,
\begin{equation*}
  P(A) = A_0 P_\text{ext} + \int_{A_0}^A p(a) \, da,
\end{equation*}
and writing the second equation in \eqref{eq:systemQ} as
\begin{equation}\label{eq:flow2conservative}
  \frac{\partial Q}{\partial t} + \frac{\partial}{\partial x} \left( \alpha \frac{Q^2}{A} + \frac{1}{\rho} (Ap - P)\right) = S_v(A, Q).
\end{equation}
The velocity form \eqref{eq:systemu} on the other hand does not always admit a conservative form, as the following proposition shows.
\begin{proposition}
  System~\eqref{eq:systemu} is conservative if and only if $\alpha=1$.
\end{proposition}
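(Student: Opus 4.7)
The plan is to treat the two directions separately, with the second direction using a standard exactness (equality of mixed partials) argument applied to the putative flux of the momentum-like equation.

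For the sufficiency direction ($\alpha=1 \Rightarrow$ conservative), I would simply exhibit the conservative form. When $\alpha=1$, the second equation of \eqref{eq:systemu} collapses to
\begin{equation*}
  \partial_t u + u\,\partial_x u + \frac{1}{\rho}\partial_x p = \frac{1}{A} S_v(A, Au).
\end{equation*}
Rewrite $u\,\partial_x u = \partial_x(u^2/2)$, and note that since $p$ in \eqref{eq:commonpressure} is a function of $A$ alone, we have $\frac{1}{\rho}\partial_x p = \partial_x\!\bigl(\tfrac{1}{\rho}\Pi(A)\bigr)$ where $\Pi$ is any antiderivative of $p$. Pairing this with the unchanged continuity equation produces a conservation law in the variables $(A,u)$.

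For the necessity direction, suppose the second equation admits a conservative form $\partial_t u + \partial_x F(A,u) = \tfrac{1}{A} S_v$. Expanding the spatial derivative by the chain rule and matching coefficients against the original equation forces
\begin{equation*}
  \partial_u F(A,u) = (2\alpha-1)u, \qquad \partial_A F(A,u) = (\alpha-1)\frac{u^2}{A} + \frac{p'(A)}{\rho}.
\end{equation*}
The key step is then the exactness condition $\partial_A\partial_u F = \partial_u\partial_A F$, which yields
\begin{equation*}
  0 = \frac{2(\alpha-1)u}{A} \qquad \text{for all admissible } A>0,\ u\in\R.
\end{equation*}
This identity can only hold if $\alpha = 1$, completing the proof.

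I do not expect a serious obstacle here; the only subtle point is to be explicit that we mean a conservative form in the chosen variables $(A,u)$ (the system is of course conservative in the variables $(A,Q)$ via \eqref{eq:flow2conservative}). A one-line remark or a clean statement of the exactness computation should suffice, and the antiderivative of $p$ used in the $\alpha=1$ case is well-defined because $p$ depends only on $A$ under the algebraic law \eqref{eq:commonpressure}.
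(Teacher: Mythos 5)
Your overall strategy coincides with the paper's own proof: for $\alpha=1$ you exhibit the flux explicitly, and for $\alpha\neq 1$ you derive a contradiction from the equality of mixed partial derivatives of the putative flux $F(A,u)$, arriving at exactly the same obstruction $2(\alpha-1)u/A\neq 0$. The one flaw is a formula in your sufficiency step: the claimed identity $\tfrac{1}{\rho}\partial_x p = \partial_x\bigl(\tfrac{1}{\rho}\Pi(A)\bigr)$ with $\Pi'=p$ is false, since $\partial_x\Pi(A)=p(A)\,\partial_x A$ while $\partial_x p = p'(A)\,\partial_x A$. No antiderivative is needed at all here: because $\rho$ is constant, $\tfrac{1}{\rho}\partial_x p$ is already the $x$-derivative of $p/\rho$, so the correct flux is simply $\tfrac12 u^2 + p/\rho$, which is what the paper writes. (The antiderivative $P(A)=\int p\,\mathrm{d}A$ is the device required for the \emph{flow} form, where the offending term is $\tfrac{A}{\rho}\partial_x p$; you appear to have imported it into the velocity form, where it does not apply.) With that formula corrected, your argument is complete and identical in substance to the paper's; your closing remark that conservativity is meant with respect to the variables $(A,u)$, and that the paper additionally assumes $\beta$ and $A_0$ constant in space, matches the paper's framing.
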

\begin{proof}
  If $\alpha=1$ we can write the second equation in \eqref{eq:systemu} as
  \begin{equation*}
    \frac{\partial u}{\partial t} + \frac{\partial}{\partial x} \left( \frac 12 u^2 +  \frac{p}{\rho} \right) =\frac{1}{A} S_v(A, Au)
  \end{equation*}
  and thus \eqref{eq:systemu} is conservative. Next, let $\alpha \ne 1$. We can generally rewrite  the second equation in \eqref{eq:systemu} as 
  \begin{equation}\label{eq:vel2variant}
    \frac{\partial u}{\partial t} + (2\alpha - 1)u \frac{\partial u}{\partial x} + \left( (\alpha -1) \frac{u^2}{A} + \frac{1}{\rho} \frac{\partial p}{\partial A} \right) \frac{\partial A}{\partial x}  =\frac{1}{A} S_v(A, Au).
  \end{equation}
  We assume in \eqref{eq:vel2variant} that neither $\beta$ nor $A_0$ vary in space. Next, we suppose that the system was conservative. Then there would be a smooth function $f$ in the variables $A$ and $u$ such that
  \begin{equation*}
    \frac{\partial u}{\partial t} + \frac{\partial f(A,u)}{\partial x} = \frac{1}{A} S_v(A, Au).
  \end{equation*}
  Due to \eqref{eq:vel2variant} it would hold
  \[
    T^A(A,u) \coloneqq (\alpha -1) \frac{u^2}{A} + \frac{1}{\rho} \frac{\partial p}{\partial A} = \frac{\partial f(A,u)}{\partial A}, \qquad T^u(A,u) \coloneqq (2\alpha - 1)u = \frac{\partial f(A,u)}{\partial u}.
  \]
      But since
      \[
        \frac{\partial^2 f(A,u)}{\partial A \partial u} =  \frac{\partial^2 f(A,u)}{\partial u \partial A}
      \]
      and
      \[
        \frac{\partial  T^A(A,u)}{\partial u} = 2 (\alpha -1) \frac{u}{A} \ne 0 = \frac{\partial T^u(A,u)}{\partial A}
      \]
      we would arrive at a contradiction. Thus the system cannot be conservative. 
    \end{proof}
    \begin{remark}
      If $\alpha=1$ then system \eqref{eq:systemu} would also be conservative for more general pressure functions, e.g., in the case that $p$ depends also on $u$. Note that in this case \eqref{eq:systemQ} might not be conservative.
    \end{remark}
    In this work we assume that the reference section area as well as the wall parameter $\beta$ are constant. We note though that smooth spatial variation in these parameters can be accounted for using additional source terms in the second equation in \eqref{eq:systemQ} or \eqref{eq:systemu}, respectively, which do not affect the conservation properties.

\section{The Lax--Friedrichs method as a relaxation limit}\label{sec: LF}
In this section we consider a relaxation of the blood flow model from Section~\ref{sec:model} along with a discretization that recovers the Lax--Friedrichs scheme in the relaxation limit.

\subsection{The relaxation system}
For the sake of brevity, the following analysis relies on the vector notation 
\begin{equation}\label{eq:systemQvector}
  \mathbf U = (A, Q)^T, \qquad \mathbf F(\mathbf U) = \left(Q, \alpha \frac{Q^2}{A} + \frac{1}{\rho} \left(Ap - P \right)\right)^T, \qquad \mathbf S(\mathbf U) = (0, S_v(A, Q))^T,
\end{equation}
which allows us to express model \eqref{eq:systemQ} as the $2 \times 2$ system of balance laws
\begin{equation}\label{eq:vecsystem}
\frac{\partial \mathbf U}{\partial t} + \frac{\partial \mathbf F(\mathbf U)}{\partial x} = \mathbf S(\mathbf U).
\end{equation}
Introducing now the new variable $\mathbf V = (V^A, V^Q)^T$ mapping the time and space variables to $\R^2$ we follow \cite{jin1995relaxschemsystem} to obtain for any $\varepsilon>0$ the Jin--Xin-type relaxation system
\begin{equation}\label{eq:relaxation}
  \begin{aligned}
    \frac{\partial \mathbf U}{\partial t} + \frac{\partial  \mathbf V}{\partial x} &= \mathbf S(\mathbf U), \\
   \frac{ \partial \mathbf V}{\partial t} + \lambda^2 \, \frac{ \partial \mathbf U}{\partial x} &= \frac{1}{\varepsilon}(\mathbf F(\mathbf U) - \mathbf V).
\end{aligned}
\end{equation}
If the relaxation speed $\lambda$ is chosen as an upper bound of the system Jacobian, in particular
\begin{equation}\label{eq:lambdastability}
\lambda \geq \alpha \frac{Q}{A} \pm \sqrt{\alpha(\alpha - 1) \frac{Q^2}{A^2} + \frac{\beta}{2 \rho} \sqrt{A}},
\end{equation}
the subcharacteristic condition verifying the stability of \eqref{eq:relaxation} holds, see \cite{liu1987hyperconserlawsrelax} for details. In the asymptotic relaxation limit $\varepsilon \to 0$ the auxiliary variable $\mathbf V$ approaches $\mathbf F (\mathbf U)$ and $\mathbf U$ solves the original problem \eqref{eq:vecsystem}, see \cite{chen1994hyperconserlaws}. Numerical schemes for \eqref{eq:relaxation} that preserve this limit property have been of high interest \cite{hu2017asymppreserschem}. The unsplit scheme proposed in \cite{jin2012asympap} is such a scheme, which we consider in more details.

\subsection{Discretization of the relaxation system.} Let a uniform partition of the real line into the mesh cells $I_j=(x_{j-1/2}, x_{j+1/2})$ of width $\Delta x$ be given. Additionally, we consider the time instances $t^n= n \Delta t$ for the uniform time step size $\Delta t$ satisfying the condition
\begin{equation}\label{eq:time-discretization}
  \Delta t= \CFL \frac{\Delta x}{\lambda}
\end{equation}
and a suitable Courant number $0<\CFL \leq 1$. We approximate the vector valued states of \eqref{eq:relaxation} by volume averages over the cell $I_j$ at time $t^n$ that we denote as $\mathbf U_j^n$ and $\mathbf V_j^n$, and adopt an analogue notation for its components. Then the scheme reads
\begin{equation}\label{eq:unsplit}
\begin{aligned}
  \mathbf U_j^{n+1} &= \mathbf U_j^n - \frac{\Delta t}{ 2 \Delta x}\left( \mathbf V^{n}_{j+1} - \mathbf V^{n}_{j-1}\right) + \frac{\lambda \Delta t}{2 \Delta x} \left(\mathbf U_{j+1}^n - 2 \mathbf U_j^n + \mathbf U_{j-1}^n \right) + \Delta t S(\mathbf U_j^n), \\
  \mathbf V_j^{n+1} &= \mathbf V_j^{n} - \frac{\lambda^2 \Delta t}{ 2 \Delta x}\left( \mathbf U^{n}_{j+1} - \mathbf U^{n}_{j-1}\right) + \frac{\lambda \Delta t}{2 \Delta x} \left(\mathbf V_{j+1}^{n} - 2 \mathbf V_j^{n} + \mathbf V_{j-1}^{n} \right) \\
  &\quad + \frac{\Delta t}{\varepsilon} \left(\mathbf F (\mathbf U_j^{n+1}) - \mathbf V_j^{n+1}\right).
\end{aligned}
\end{equation}
We note that the scheme has been derived from an upwind discretization of \eqref{eq:relaxation} in characteristic variables and an implicit-explicit time discretization. The implicit time discretization in the second equation of \eqref{eq:unsplit} is necessary to obtain an asymptotic preserving scheme, cf.~\cite{hu2017asymppreserschem}, but does not require the solution of a nonlinear system as $\mathbf U_j^{n+1}$ can be computed independently.

\paragraph{The limit scheme.} As we take the limit $\varepsilon \to 0$ in \eqref{eq:unsplit} we obtain the scheme
\begin{equation}\label{eq:limitscheme}
  \mathbf U_j^{n+1} = \mathbf U_j^n - \frac{\Delta t}{\Delta x}\left( \mathcal F_{j+1/2}^n - \mathcal F_{j-1/2}^n \right) + \Delta t S(\mathbf U_j^n)
\end{equation}
for the original problem \eqref{eq:vecsystem} with numerical fluxes given by
\begin{equation}\label{eq:numflux}
  \mathcal F_{j-1/2}^n = \frac 1 2 \left( \mathbf V_{j-1}^n + \mathbf V_{j}^n \right) - \frac \lambda 2 \left( \mathbf U_j^n - \mathbf U_{j-1}^n \right), \qquad \mathbf V_j^n = \mathbf F (\mathbf U_j^n) \quad \forall j \in \Z.
\end{equation}
Refer to \cite{herty2023centrschemtwo} for the detailed limit procedure. Taking into account \eqref{eq:time-discretization} the classical Lax--Friedrichs method is recovered taking $\CFL=1$.

\paragraph{High resolution extension.}
To increase the accuracy in space we use the MUSCL scheme~\cite{leer1979towarultimconserdifferschem} that employs slope reconstructions. Here we apply the approach to the characteristic variables of the relaxation system \eqref{eq:relaxation} implying that four scalar quantities are linearly reconstructed. This discretization gives rise to the high order correction terms
\begin{equation}\label{eq:muscl}
\mathcal H_{j-1/2}^\text{MUSCL} = \frac{\Delta x}{2} (\mathbf s_{j-1}^{n,-} - \mathbf s_{j}^{n,+}) \quad \forall j \in \Z
\end{equation}
to be added to the numerical fluxes \eqref{eq:numflux}.
We use the minmod limiter to prevent oscillatory behavior of the scheme. The reconstructed slopes in \eqref{eq:muscl} thus take the form
\begin{align}
  \mathbf s_j^{n, \pm} &\coloneqq \minmod\left( \frac{\mathbf V_j^n  - \mathbf V_{j-1}^n \pm  \lambda (\mathbf U_j^n - \mathbf U_{j-1}^n)  }{2 \Delta x},
  \frac{\mathbf V_{j+1}^n - \mathbf V_j^n \pm \lambda (\mathbf U_{j+1}^n - \mathbf U_{j}^n)  }{2 \Delta x} \right),
\end{align}
where the \emph{minmod} operator is given by
\[
  \minmod(a,b) =
  \begin{cases*}
    0 & if $\sign(a) \neq \sign(b)$ \\
    a & if $|a| \leq |b|$ and $\sign(a) = \sign(b)$ \\
    b & if $|a| > |b|$ and $\sign(a) = \sign(b)$
  \end{cases*}
\]
for scalar arguments $a,b \in \R$ and component-wise in case of vectors.

\subsection{Implications for the velocity form and extended pressure models}
In this section we sketch how our relaxation based scheme derivation can be applied to model \eqref{eq:systemu} and how the extended pressure law \eqref{eq:pressurelong} is treated numerically. 
\paragraph{Velocity form.} The above approach is also applicable to the blood flow model in velocity form \eqref{eq:systemu} in the case $\alpha=1$ by taking
\begin{equation}\label{eq:systemuvector}
  \mathbf U = (A, u)^T, \qquad \mathbf F(\mathbf U) = \left(Au, \frac 1 2 u^2 + \frac{p}{\rho}\right)^T, \qquad \mathbf S(\mathbf U) = \left(0, \frac{1}{A}S_v(A, Q)\right)^T
\end{equation}
instead of \eqref{eq:systemQvector} in \eqref{eq:vecsystem}. To satisfy the subcharacteristic condition in this case, it is sufficient to choose the relaxation speed $\lambda$ such that \eqref{eq:lambdastability} holds for $\alpha=1$. If $\alpha \ne 1$ the generalized relaxation approach from \cite{kolbe2024numerschemcoupl} relying on path-conservative schemes can be applied to system~\eqref{eq:systemu}.

\paragraph{Viscoelasticity.} In the case that viscoelasticity is considered within model \eqref{eq:systemQ} by means of the extended pressure law \eqref{eq:pressurelong} the additional second order term \eqref{eq:parabolicterm} is approximated by the semi-implicit finite difference formula  

\begin{equation}
  R_j(\mathbf U^n, \mathbf U^{n+1}) = \frac{\gamma \sqrt{\pi} A_j^n}{2 \rho \sqrt{A_0}^3} ~ \frac{Q_{j+1}^{n+1} - 2 Q_j^{n+1} + Q_{j-1}^{n+1}}{\Delta x^2}.
\end{equation}
To simulate the augmented model we add $(0, \Delta t  R_j(\mathbf U^n, \mathbf U^{n+1}))^T$  to the right-hand side of scheme \eqref{eq:limitscheme}, which results in a linear system that is to be solved in each time step. As the system matrix is tridiagonal we use the Thomas algorithm to efficiently solve the system within our scheme.

\section{Boundary conditions}\label{sec:BC}
\begin{figure}
  \centering
  \includegraphics{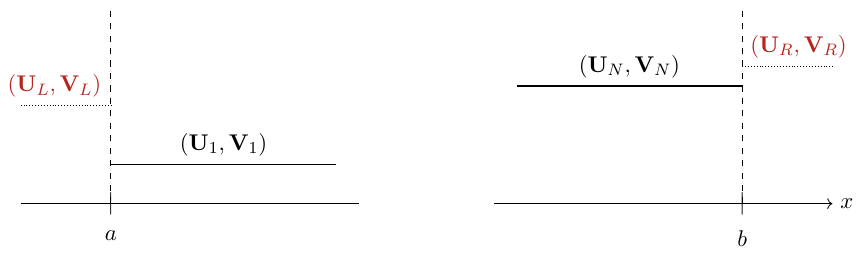}
  \caption{The relaxation system at the boundaries. The states $\mathbf U_L$, $\mathbf V_L$, $\mathbf U_R$ and $ \mathbf V_R$ constitute suitable boundary data with respect to the cell averages $\mathbf U_1$, $\mathbf V_1$, $\mathbf U_N$, $\mathbf V_N$ of the numerical solution.}\label{fig:boundarystates}
\end{figure}

In this section we discuss appropriate boundary conditions for scheme~\eqref{eq:limitscheme}. Boundary conditions for the blood flow model \eqref{eq:systemQ}, which can, among others, take into account a beating heart, the vascular periphery or occlusions, have been extensively discussed in the literature, see e.g.,~\cite{formaggia2006}. The new boundary conditions we construct here need to account for the fact that the employed Lax--Friedrichs scheme is derived from relaxation system \eqref{eq:relaxation} and thus constitute suitable boundary data for this system~\cite{dubois1988boundconditnonlin}. In particular, this means that not only boundary states with respect to the original state $\mathbf U$ but also with respect to the variable $\mathbf V$ are required. Although our problem is of parabolic nature if the extended pressure form \eqref{eq:pressurelong} is assumed we always assume that the problem remains convection dominated and derive boundary data from the hyperbolic part of the system.

In the following we consider blood flow on the bounded domain $(a,b)$ discretized by scheme \eqref{eq:limitscheme} over $N$ mesh cells $I_1,\ldots,I_N$ such that $a$ is located at the left boundary of $I_1$ and $b$ at the right boundary of $I_N$. Given the cell averages $\mathbf U_1^n, \dots, \mathbf U_N^n$ and fluxes $\mathbf V_1^n, \dots, \mathbf V_N^n$ the boundary states $\mathbf U_L^n$, $\mathbf V_L^n$, $\mathbf U_R^n$, $\mathbf V_R^n$ for the computation of $\mathcal F_{1/2}^n$ and $\mathcal F_{N+1/2}^n$ according to \eqref{eq:numflux} are sought in order to update the numerical solution to the time instance $t^{n+1}$. For brevity, we drop the time index in the following discussion. The situation is visualized in Figure~\ref{fig:boundarystates}. We make use of our findings from~\cite{herty2023centr} implying that suitable boundary data for the relaxation system satisfies the conditions 
\begin{equation}\label{eq:laxcurves}
  \mathbf V_L - \mathbf V_1 = \lambda (\mathbf U_L - \mathbf U_1)
  \qquad \text{and} \qquad
  \mathbf V_R - \mathbf V_N = \lambda (\mathbf U_N - \mathbf U_R).
\end{equation}

\subsection{Non-reflecting boundary conditions}
Typically, boundaries in the blood flow model are assumed to be non-reflecting, which implies that 
\begin{equation}\label{eq:nonreflecting}
  \mathbf l_1(\mathbf U_L)^T \, \frac{ \partial \mathbf F(\mathbf U)}{\partial x} \bigg \rvert_{x=a} = 0 \qquad \text{and} \qquad
  \mathbf l_2(\mathbf U_R)^T \, \frac{ \partial \mathbf F(\mathbf U)}{\partial x} \bigg \rvert_{x=b} = 0
\end{equation}
holds, where $\mathbf l_1$ and  $\mathbf l_2$ denote the left eigenvectors corresponding to the negative and positive eigenvalue of the system \eqref{eq:vecsystem}, respectively, see \cite{thompson1987time}. This has been addressed in the literature via extrapolation of the outgoing Riemann invariant within Taylor--Galerkin schemes, see \cite{quarteroni2004mathem}; we consider an alternative approach more suitable for our numerical method. In the situation shown in Figure~\ref{fig:boundarystates}, where we have piecewise constant data, the derivatives in \eqref{eq:nonreflecting} can be approximated by a central difference with respect to a small increment $\delta < \Delta x$ so that after eliminating the denominator we obtain
\begin{equation}\label{eq:nonreflectingdifference}
  \mathbf l_1(\mathbf U_L)^T [\mathbf F(\mathbf U_1) - \mathbf F(\mathbf U_L)]  = 0 \qquad \text{and} \qquad
  \mathbf l_2(\mathbf U_R)^T [\mathbf F(\mathbf U_R) - \mathbf F(\mathbf U_N)] = 0.
\end{equation}
Next, employing \eqref{eq:nonreflectingdifference} to formulate a consistent condition in the variable $\mathbf V$, cf.~\cite{herty2023centrschemtwo}, and using \eqref{eq:laxcurves} we derive the conditions
\begin{equation}\label{eq:nonreflectingQ}
  \begin{aligned}
    Q_1 - Q_L &= \left(\alpha \frac{Q_L}{A_L}+ \sqrt{\alpha (\alpha -1) \frac{Q_L^2}{A_L^2} + \frac{\beta}{2 \rho A_0} \sqrt{A_L}}~\right) \, (A_1 - A_L), \\
    Q_N - Q_R &= \left(\alpha \frac{Q_R}{A_R}- \sqrt{\alpha (\alpha -1) \frac{Q_R^2}{A_R^2} + \frac{\beta}{2 \rho A_0} \sqrt{A_R}} ~\right)\, (A_N - A_R)
  \end{aligned}
\end{equation}
for system \eqref{eq:systemQ}, and under the assumption $\alpha=1$ the conditions
\begin{equation}\label{eq:nonreflectingu}
  u_1 - u_L =  A_L^{-3/4} \sqrt{\frac{\beta}{2\rho}} (A_1 - A_L) 
  \qquad \text{and} \qquad
  u_N - u_R =  -A_L^{-3/4} \sqrt{\frac{\beta}{2\rho}} (A_N - A_R)  
\end{equation}
for system \eqref{eq:systemu}. In Appendix~\ref{sec:boundary2} we provide a second order approach to the boundary conditions \eqref{eq:nonreflectingQ}. 

\subsection{Prescribed pressure, mass flow and velocity at the boundary} For well-posedness at the boundary additional conditions complementing \eqref{eq:nonreflectingQ} or  \eqref{eq:nonreflectingu} are needed. In some application a boundary pressure $p_L$ or $p_R$ is imposed. In this case the boundary section area can be deduced inverting the pressure law \eqref{eq:commonpressure}, i.e.\ taking $A_L=p^{-1}(p_L)$ or $A_R = p^{-1}(p_R)$. The corresponding mass flow / velocity is then computed from \eqref{eq:nonreflectingQ} / \eqref{eq:nonreflectingu} solving a linear equation if $\alpha=1$ or a quadratic one after eliminating the square root if $\alpha \ne 1$.

In other applications a boundary velocity $u_L$ or $u_R$ might be imposed. If the model is given in velocity form and $\alpha=1$ then the corresponding section area at the boundary can be computed from \eqref{eq:nonreflectingu}, which after eliminating all fractional exponents involves the solution of a quartic equation. If the model is given in flow form we first rewrite $Q=Au$ in \eqref{eq:nonreflectingu} and afterwards similarly solve the corresponding equation for $A_L$ or $A_R$. In the special case of a reflecting boundary, i.e., $u_L=0$ or $u_R=0$, this computation significantly simplifies.

The above techniques are used for instance to model a beating heart. Therefore, one alternates between imposing a sinusoidal inlet pressure and reflecting boundary conditions at the inlet representing a closed valve. 

These procedures have always led to a single real solution for the boundary value in all our numerical computations employing relevant parameters. After the boundary state $\mathbf U_L$ or $\mathbf U_R$ has been determined the corresponding boundary state in the variable $\mathbf V$ is derived from~\eqref{eq:laxcurves}.

\begin{figure}
  \centering
  \begin{tabular}{@{\hspace{1em}} c @{\hspace{6em}} m{\linewidth/2}}
    \includegraphics{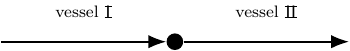}
    &
      \includegraphics{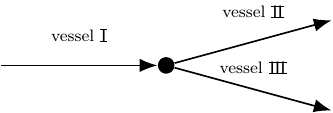}
\end{tabular}
\caption{One-to-one (left) and one-to-two (right) vascular junction.}\label{fig:vascularjunctions}
\end{figure}

\section{Coupling in a vascular network}\label{sec:cpl}
A full arterial network with bifurcations, curvatures and discontinuities in wall properties may be approximated by a graph with edges representing straight vessel segments. In addition, suitable boundary conditions at the nodes are required that connect the flow dynamics of the incoming and outgoing edges. In the following we discuss some of these \emph{coupling conditions} in the basic cases shown in Figure~\ref{fig:vascularjunctions} and embed them in the relaxation form \eqref{eq:relaxation} that is used within our scheme. We thereby focus on model~\eqref{eq:systemQ} and without loss of generality assume $P_{\text{ext}}=0$.

\subsection{One-to-one coupling}\label{sec:onetoone}
To model discontinuities in vessel properties as they might occur e.g.\ in the presence of a stent, the domain is typically decomposed and two segments with varying model parameters are coupled at an interface, as visualized in Figure~\ref{fig:vascularjunctions} (left). In this situation conditions are required to determine the boundary states $\mathbf U_R^n$ and $\mathbf V_R^n$ for the left coupled vessel as well as $\mathbf U_L^n$ and $\mathbf V_L^n$ for the right coupled vessel at the interface. These states are used for the update of the numerical solution at the interface. As in Section~\ref{sec:BC} we neglect the time index in the following.

Based on \cite{formaggia2003} we impose continuity of both, mass flux and generalized total pressure\footnote{The total pressure we consider here generalizes the form in \cite{formaggia2003}, where $\alpha=1$ is assumed.} defined by $p_t = \alpha \frac{\rho}{2} \bigl( \frac{Q}{A} \bigr)^2 + p$, i.e.\ the coupling conditions 
\begin{align}
	Q_R &= Q_L, \label{eq:contQ} \\
  \alpha \, \frac{\rho}{2} \left( \frac{Q_R}{A_R} \right)^2 + p\left( A_R; A_0^{\rmn{1}}, \beta^{\rmn{1}} \right) &= \alpha \, \frac{\rho}{2} \left( \frac{Q_L}{A_L} \right)^2 + p\left( A_L; A_0^{\rmn{2}}, \beta^{\rmn{2}} \right) \label{eq:contpt}
\end{align}
hold. Here, the reference section area and the wall parameter in the pressure law may depend on the vessel and we employ the notations $A_0^{\rmn{1}}$ and $\beta^{\rmn{1}}$ to refer to the parameters of the incoming vessel on the left and $A_0^{\rmn{2}}$ and $\beta^{\rmn{2}}$ to refer to the parameters of the outgoing vessel on the right, respectively. We note that in addition to the conditions that we consider here, extended conditions are available that take into account the coupling angle, see~\cite{formaggia2003}.

\paragraph{Coupling the relaxation system.}
In our relaxation approach coupling conditions for the auxiliary variables $\mathbf{V}$ are additionally required to provide all boundary values in scheme~\eqref{eq:limitscheme}. To numerically approximate the networked system using this scheme we alternate between computing the boundary/coupling data and updating the numerical solution on all edges. We employ the consistency principle introduced in \cite{herty2023centrschemtwo} to derive suitable conditions.

As the variable $V^A$ represents the first component of the flux $\mathbf F(\mathbf U)$, condition \eqref{eq:contQ} implies the consistent condition
	\begin{equation}\label{eq:VA}
          V_R^A = V_L^A.
	\end{equation}
Similarly, a condition for the variable $V^Q$ is derived using \eqref{eq:contpt}. Taking into account the flux in \eqref{eq:flow2conservative} we obtain
  \begin{equation}\label{eq:VQ}
A_R^{-1}\left[ V^Q_R - \frac{\alpha}{2} \frac{Q_R^2}{A_R} +   \rho^{-1} \, P(A_R; A_0^{\rmn{1}}, \beta^{\rmn{1}}) \right]= A_L^{-1}\left[ V^Q_L - \frac{\alpha}{2} \frac{Q_L^2}{A_L} + \rho^{-1}\,P(A_L; A_0^{\rmn{2}}, \beta^{\rmn{2}}) \right].
  \end{equation}
To eventually obtain the coupling data, a solution of the nonlinear system given by \eqref{eq:contQ}, \eqref{eq:contpt}, \eqref{eq:VA}, \eqref{eq:VQ} and \eqref{eq:laxcurves} needs to be computed. To this end we first eliminate the variables within $\mathbf V_R$ and $\mathbf V_L$ using \eqref{eq:laxcurves} in \eqref{eq:VA} and \eqref{eq:VQ} and then solve the remaining system for the variables within $\mathbf U_R$ and $\mathbf U_L$. We provide a detailed algorithm for the solution of the system in Appendix~\ref{sec:algo}.

\paragraph{Velocity formulation.}
In analogy to the above derivation, coupling conditions for the \textit{velocity form} and $\alpha=1$ are derived, which read
    \begin{align*}
        A_R u_R &= A_L u_L,\\
      \frac{\rho}{2} u_R^2 + p\left( A_R; A_0^{\rmn{1}}, \beta^{\rmn{1}} \right) &= \frac{\rho}{2} u_L^2 + p\left( A_L; A_0^{\rmn{2}}, \beta^{\rmn{2}} \right),\\
      V_R^A &= V_L^A,\\
      V_R^u &= V_L^u.
    \end{align*}
Again, solving this system taking \eqref{eq:laxcurves} into account gives rise to the boundary fluxes within scheme \eqref{eq:limitscheme} for the coupled system \eqref{eq:systemu}. 
	
\subsection{One-to-two coupling}
	This section is concerned with bifurcations i.e.\ vascular junctions, at which one vessel is divided into two as shown in Figure~\ref{fig:vascularjunctions} (right).
  In the following, we indicate for all quantities the corresponding vessel of the junction using an index from \Rmn{1} to \Rmn{3} as marked in the figure. Boundary/coupling data are sought at the right boundary for vessel \Rmn{1} and at the left boundaries for vessels \Rmn{2} and \Rmn{3}. Following \cite{formaggia2003} we impose conservation of mass flow and continuity of total pressure at the coupling node:
  \begin{align}
    Q_R^\rmn{1} &= Q_L^\rmn{2} + Q_L^\rmn{3},\label{eq:flowconservation}\\
    \alpha \, \frac{\rho}{2} \left( \frac{Q_R^\rmn{1}}{A_R^\rmn{1}} \right)^2 + p\left( A_R^\rmn{1}; A_0^{\rmn{1}}, \beta^{\rmn{1}} \right) &= \alpha \, \frac{\rho}{2} \left( \frac{Q_L^\rmn{2}}{A_L^\rmn{2}} \right)^2 + p\left( A_L^\rmn{2}; A_0^{\rmn{2}}, \beta^{\rmn{2}} \right),\\
    \alpha \, \frac{\rho}{2} \left( \frac{Q_R^\rmn{1}}{A_R^\rmn{1}} \right)^2 + p\left( A_R^\rmn{1}; A_0^{\rmn{1}}, \beta^{\rmn{1}} \right) &= \alpha \, \frac{\rho}{2} \left( \frac{Q_L^\rmn{3}}{A_L^\rmn{3}} \right)^2 + p\left( A_L^\rmn{3}; A_0^{\rmn{2}}, \beta^{\rmn{2}} \right).
  \end{align}
  
        In addition, we propose new conditions for the variable $\mathbf{V}$ to be used within scheme~\eqref{eq:limitscheme}. Relying again on the consistency principle in the relaxation limit gives rise to
        \begin{equation}\label{eq:VA12}
          V_R^{A,\rmn{1}}=V_L^{A,\rmn{2}} + V_L^{A,\rmn{3}} 
        \end{equation}
        when considering the original condition \eqref{eq:flowconservation}. In analogy to \eqref{eq:VQ} we further derive conditions for the variable $V^Q$ that read
        \begin{align}
          \frac{A_L^\rmn{2}}{A_R^\rmn{1}}\left[ V^{Q,\rmn{1}}_R - \frac{\alpha}{2} \frac{(Q_R^\rmn{1})^2}{A_R^\rmn{1}} +   \rho^{-1} \, P(A_R^\rmn{1}; A_0^{\rmn{1}}, \beta^{\rmn{1}}) \right]&= V^{Q,\rmn{2}}_L - \frac{\alpha}{2} \frac{(Q_L^\rmn{2})^2}{A_L^\rmn{2}} + \rho^{-1}\,P(A_L^\rmn{2}; A_0^{\rmn{2}}, \beta^{\rmn{2}}) , \label{eq:VQ121}\\
          \frac{A_L^\rmn{3}}{A_R^\rmn{1}}\left[ V^{Q,\rmn{1}}_R - \frac{\alpha}{2} \frac{(Q_R^\rmn{1})^2}{A_R^\rmn{1}} +   \rho^{-1} \, P(A_R^\rmn{1}; A_0^{\rmn{1}}, \beta^{\rmn{1}}) \right]&= V^{Q,\rmn{3}}_L - \frac{\alpha}{2} \frac{(Q_L^\rmn{3})^2}{A_L^\rmn{3}} + \rho^{-1}\,P(A_L^\rmn{3}; A_0^{\rmn{3}}, \beta^{\rmn{3}}). \label{eq:VQ122}
        \end{align}
        To compute the coupling data in practice, the full system is solved for the coupling states $\mathbf U_R^\rmn{1}$, $\mathbf U_L^\rmn{2}$ and $\mathbf U_L^\rmn{3}$ after component-wise substituting
        \begin{align*}
          \mathbf V_R^\rmn{1}= \mathbf V_N^\rmn{1}+ \lambda(\mathbf U_N^\rmn{1} - \mathbf U_R^\rmn{1}), \quad \mathbf V_L^\rmn{2}= \mathbf V_1^\rmn{2}+ \lambda(\mathbf U_L^\rmn{2} - \mathbf U_1^\rmn{2}), \quad \mathbf V_L^\rmn{3}= \mathbf V_1^\rmn{3}+ \lambda(\mathbf U_L^\rmn{3} - \mathbf U_1^\rmn{3}) 
        \end{align*}
        in \eqref{eq:VA12}, \eqref{eq:VQ122} and \eqref{eq:VQ122}. We use the multidimensional Newton--Raphson method to solve this nonlinear system.

\section{Numerical experiments}\label{sec:exp}

In this section we apply our numerical scheme \eqref{eq:limitscheme} in combination with the derived boundary and coupling conditions in various numerical experiments to demonstrate the performance of our approach. Experiments on one (the uncoupled case) and two edges (one-to-one coupling) are considered and each edge is discretized over uniform mesh cells of size $\Delta x$. Fixed time steps are used that are given by 
\[
  \Delta t = \CFL \frac{\Delta x}{\lambda},
  \]
where the relaxation speed $\lambda$ is chosen minimal with respect to \eqref{eq:lambdastability}. Each edge represents a 400 cm long vessel that is discretized over 800 grid points if not otherwise noted. Those unphysiologically long vessels have been chosen to better visualize the dynamics in the following. We fix $\CFL=1$ for the first order scheme resulting in the classical Lax--Friedrichs scheme; for our second order scheme $\CFL=0.2$ has lead to accurate results and robust computations.
The code is implemented in the Julia programming language \cite{Julia} on the basis of the implementation for the scheme \cite{CodeCentralNetworkScheme}. The main parameters are chosen as $A_0=6.6$ cm$^2$, $h_0=0.26$ cm, $\nu = 0.5$, $\mu=0$,  $E=2.43 \cdot 10^6$ dyne/cm$^2$, $\rho=1.06$ g/cm$^2$, and $\alpha=1$.

\subsection{The uncoupled scheme}
In this section we consider numerical experiments and tests of the uncoupled scheme, i.e.\ on a single edge. 

\paragraph{Grid convergence.}
We first study the convergence of our first and second order schemes from Section~\ref{sec: LF} in space. Therefore we conduct an experiment with smooth initial data and Neumann boundary conditions and another one simulating an entering pulse wave from the left boundary. Within the second order MUSCL scheme \eqref{eq:muscl} we have varied the Courant number with the time step taking $\CFL=0.2 \cdot \Delta x$ to avoid a reduction of the error due to the first order time discretization that we use.

\begin{table}
    \caption{$L^1$-errors at time $t=0.05$ s for the flow rate $Q$ and the section area $A$ under grid refinement in case of smooth initial data and Neumann boundary conditions.}\label{tab:NerrorsGauss}\vspace{1em}
  \centering
    \footnotesize
    \begin{tabular}{cc|cc|cc}
        &&\multicolumn{2}{c|}{first order scheme} & \multicolumn{2}{c}{second order scheme} \\
        Quantity & num.\ cells & $L^1$-error & EOC & $L^1$-error & EOC   \\
        \hline
           $Q$ & 50 & 1.931 &  & 15.86 & \\
            & 100 & 1.161 & 0.734 & 5.818 &  1.447\\
            & 200 & 0.633 & 0.874 & 1.795 & 1.696 \\
            & 400 & 0.318 & 0.996 & 0.500 & 1.844\\
            & 800 & 0.147 & 1.113 & 0.136 & 1.881\\
            & 1600 & 0.064 & 1.209 & 0.034 & 1.980\\
            \hline
          $A$  & 50 & 4.278e-3 &  & 2.943e-2 & \\ 
            & 100 & 2.624e-3 & 0.705 & 1.082e-2 & 1.444\\
            & 200 & 1.407e-3 & 0.900 & 3.297e-3 & 1.714\\
            & 400 & 6.975e-4 & 1.012 & 9.129e-4 & 1.853\\
            & 800 & 3.210e-4 & 1.119 & 2.469-4 & 1.886\\
            & 1600 & 1.384e-4 & 1.214 & 6.247e-5 & 1.983\\
    \end{tabular}
\end{table}

In the first experiment we take the section area $A(x, 0) = A_0 + \exp(-0.005(x-100)^2)$ and the mass flux $Q(\cdot, 0) \equiv 0$ on the shortened spatial domain $[0,200\text{ cm}]$. At the boundaries we impose homogeneous Neumann conditions. We successively refine the grid and compute discrete $L^1$-errors in the quantities $Q$ and $A$ at the final time $t=0.05$ with respect to a reference solution on $6400$ cells and present them in Table~\ref{tab:NerrorsGauss}. Along we show the experimental order of convergence (EOC)\footnote{The EOC is defined by EOC=$\log_2(e_1/e_2)$, where $e_1$ and $e_2$ are the errors in two consecutive lines of the table.}.

Table \ref{tab:NerrorsGauss} shows that the errors of the first order scheme indicate a nearly linear convergence rate. For the second order scheme we see nearly second order convergence for higher numbers of cells. Comparing both schemes we observe a clear advantage of the second order scheme as it achieves smaller errors on fine meshes. 

A similar grid convergence study is done in an experiment simulating a pulse wave entering the tube from the left-hand side. This pulse wave is achieved by imposing the sinusoidal inlet pressure
\[
P_{\nu}(t) = 6 \cdot 10^4 \cdot \sin(5 \pi t)
\]
at the left boundary.
Here, the initial values are taken constant as $A(\cdot, 0) \equiv A_0$ and $Q(\cdot, 0) \equiv 0$. Again a reference solution on 6400 cells is used to compute discrete $L^1$-errors at the final time $t=0.1$ s, which are presented in Table~\ref{tab:Nerrors}.

\begin{table}
    \centering
    \caption{$L^1$-errors at time $t=0.1$ s for the flow rate $Q$ and the section area $A$ under grid refinement in case of a pulse wave inflow.}\label{tab:Nerrors}\vspace{1em}
    \footnotesize
    \begin{tabular}{cc|cc|cc}
      &&\multicolumn{2}{c|}{first order scheme} & \multicolumn{2}{c}{second order scheme} \\
        Quantity & num. cells & $L^1$-error & EOC & $L^1$-error & EOC \\
        \hline
           $Q$ & 50 & 22.92 &  & 42.88& \\
            & 100 & 12.55 & 0.869 & 17.39 & 1.302\\
            & 200 & 6.417 & 0.968 & 6.236 & 1.479\\
            & 400 & 3.208 & 1.000  & 2.447 & 1.350\\
            & 800 & 1.535 & 1.063 & 1.012 & 1.274\\
            & 1600 & 0.675 & 1.186 & 0.406 & 1.319\\
            \hline
            $A$& 50 & 3.736e-2 &  & 7.162e-2 & \\
            & 100 & 2.085e-2 & 0.842 & 2.919e-2 & 1.295\\
            & 200 & 1.080e-2 & 0.949 & 1.055e-2 & 1.468\\
            & 400 & 5.450e-3 & 0.986 & 4.164e-3 & 1.342\\
            & 800 & 2.623e-3 & 1.055 & 1.728e-3 & 1.269\\
            & 1600 & 1.158e-3 & 1.180 & 6.938e-4 & 1.316\\
    \end{tabular}
\end{table}
Also in this setup the EOCs indicate a first order convergence of the $L^1$-error. The MUSCL scheme in combination with the second order boundary condition manages to further decrease the error; an EOC of approximately 1.3 can be observed. We note that due to the lack of smoothness of the left boundary data in time, second order convergence in space is not expected in this experiment. 

\begin{figure}
  \centering
  \includegraphics[width=0.8\linewidth]{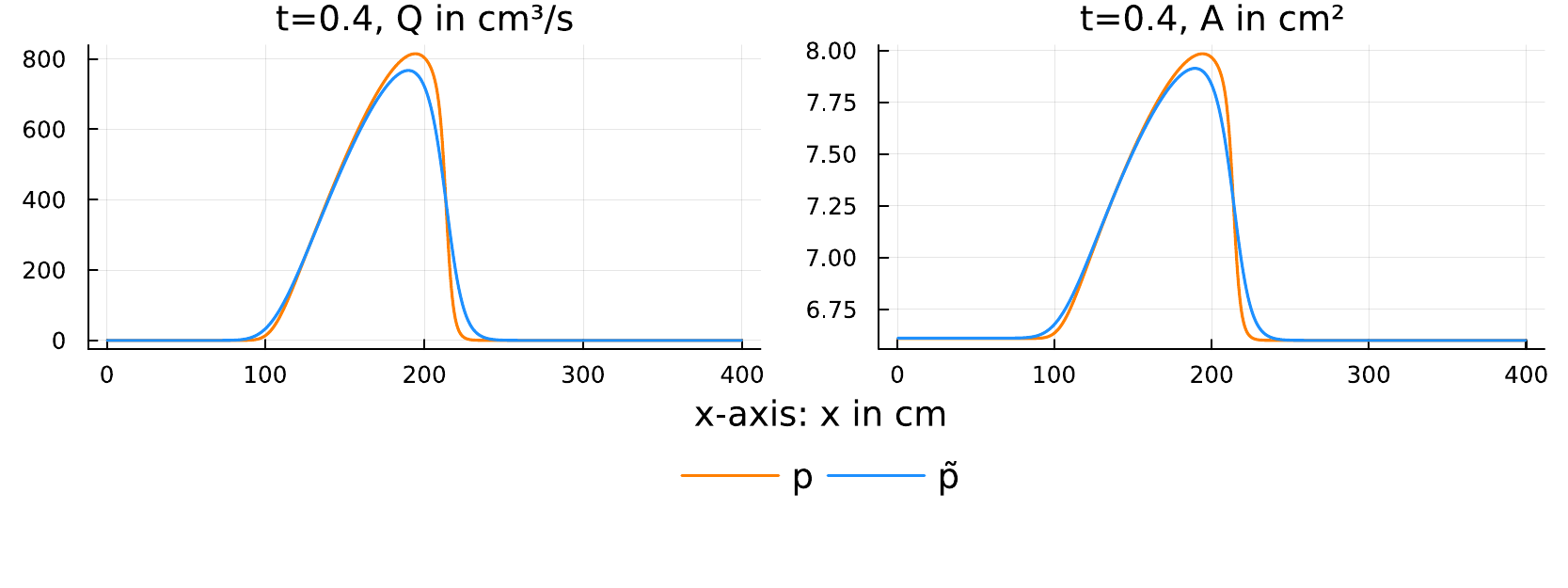}
  \caption{Flow rate and section area at time $t=0.4$ using the classical pressure model \eqref{eq:commonpressure} (orange) and the extended pressure model \eqref{eq:pressurelong} (blue).}
  \label{fig:aorta_comparePressure}
\end{figure}

\paragraph{Viscoelasticity.}
Next, we investigate the impact of the extended pressure \eqref{eq:parabolicterm} taking into account the viscoelasticity of the vessel wall. To this end we revisit the previous experiment considering the entering pulse wave and compare numerical solutions of the classical model ($\gamma=0$) and the extended pressure model (taking $\gamma = 120 \cdot \frac{A_0^{3/2}}{\sqrt{\pi}}$).
In Figure~\ref{fig:aorta_comparePressure} the numerical solution in terms of flow rate and section area at time instance $t=0.4$ are shown. A significant smoothing effect of the viscoelasticity is exhibited in both, the flow rate and the section area, decreasing the amplitude of the pressure wave. 

\subsection{One-to-one coupling}
In this section we consider two one-to-one coupling experiments studying discontinuities in different wall properties at the coupling node. In both experiments we use the initial data $A^\rmn{1}(\cdot, 0) \equiv A_0^\rmn{1}$, $A^\rmn{2}(\cdot, 0) \equiv A_0^\rmn{2}$ and $Q^\rmn{1}(\cdot, 0) \equiv Q^\rmn{2}(\cdot, 0) \equiv 0$, as boundary conditions we impose a pulse wave entering from the left boundary of vessel \Rmn{1} and homogeneous Neumann conditions at the right boundary of vessel \Rmn{2}. Both vessels are coupled at an interface as described in Section~\ref{sec:onetoone}

Firstly, a discontinuity in the reference section area $A_0$ is assumed; in more details we assume $A_0^\rmn{1}= 1.25 A_0$ on the left incoming vessel and $A_0^\rmn{2}=0.75A_0$ on the right outgoing vessel. In Figure~\ref{fig:aorta_coupling11_A0_ls} we show mass flow, pressure, section area and velocity $u=Q/A$ at two different time steps. At time $t=0.425$ we see some deflections at the coupling interface $x=0$. Due to the reduced vessel diameter the blood is decelerated in front of the interface. Thus, the velocity and the mass flow are decreased, and the pressure and section area are increased at this position. This effect creates backward-propagating waves, which are visible at time $t=0.6$ in addition to the first wave that has passed the interface.

\begin{figure}
  \centering
  \includegraphics[width=0.9\linewidth]{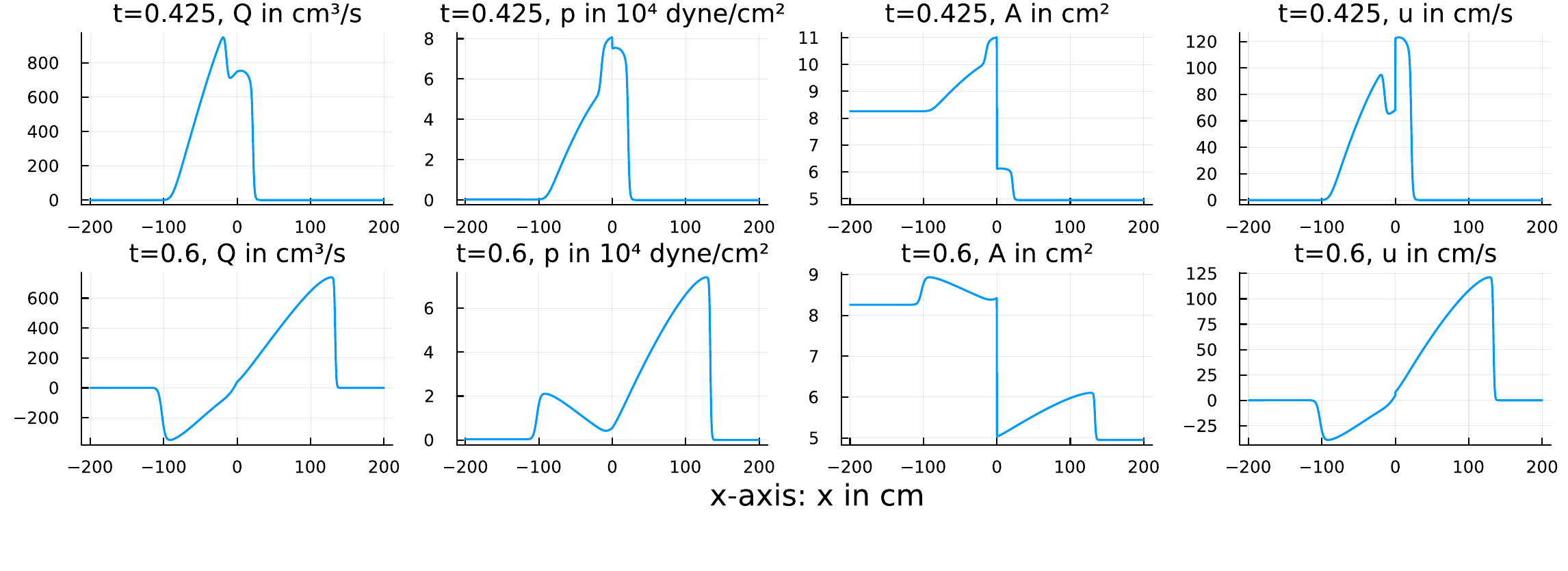}
  \caption{Flow rate, pressure, section area and velocity at time instances $t=0.425$ and $t=0.6$ over two vessels with different reference section areas ($A_0^\rmn{1}> A_0^\rmn{2}$) coupled at $x=0$.}
  \label{fig:aorta_coupling11_A0_ls}
\end{figure}

The next experiment is concerned with a discontinuity in the vessel elasticity. In more details a transition from the stiffer vessel \Rmn{1} to the more elastic vessel \Rmn{2} is modeled by imposing the Young modulus $E^\rmn{1} = 1.25 E$ to the left and  $E^\rmn{2} = 1.75 E$ to the right vessel. In Figure~\ref{fig:aorta_coupling11_E_ls} we show the numerical solution of this coupled experiment at two time instances. At time $t=0.4$ the more elastic vessel is further extended as can be seen examining the section area near the interface. As a consequence the blood velocity in front of the discontinuity is increased, whereas the pressure is decreased there and the mass flux is increased. At time $t=0.6$ the pressure wave has passed the interface and a backwards traveling wave appears from the interface causing a pressure decrease and a tightening of the vessel.

\begin{figure}
  \centering
  \includegraphics[width=0.9\linewidth]{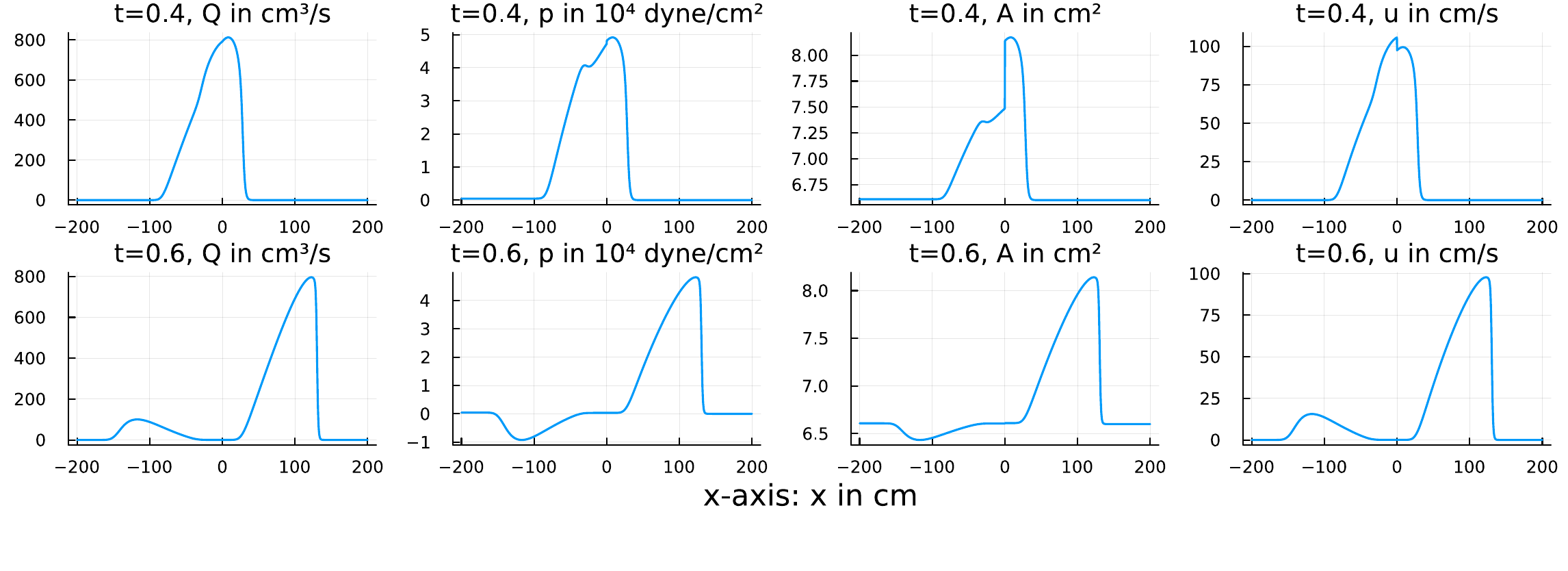}
  \caption{Flow rate, pressure, section area and velocity at time instances $t=0.4$ and $t=0.6$ over two vessels with different elasticity properties ($E^\rmn{1}< E^\rmn{2}$) coupled  at $x=0$.}
  \label{fig:aorta_coupling11_E_ls}
\end{figure}

\paragraph{Coupling error.} For consistency with the problem given by~\eqref{eq:systemQ} on both vessels together with the coupling conditions \eqref{eq:contQ} and \eqref{eq:contpt} it is necessary that our numerical solution approximately satisfies the coupling condition. We test this by computing the errors
\begin{equation}\label{eq:couplingerrors}
e_1^c = |Q_N^\rmn{1} - Q_1^\rmn{2}|, \quad e_2^c = \left| \alpha \, \frac{\rho}{2} \left(\frac{Q_N^\rmn{1}}{A_N^\rmn{1}} \right)^2 + p(A_N^\rmn{1}; A_0^\rmn{1}, \beta^\rmn{1}) - \alpha \, \frac{\rho}{2} \left(\frac{Q_1^\rmn{2}}{A_1^\rmn{2}} \right)^2 - p(A_1^\rmn{2}; A_0^\rmn{2}, \beta^\rmn{2}) \right|
\end{equation}
that quantify the approximation of the coupling conditions by substituting the numerical trace data next to the interface. In Table~\ref{tab:CplErrors} those errors are shown for both test cases above under grid refinement at the fixed time instance $t=0.5$, at which the pulse wave has reached the interface. In both experiments the EOCs indicate first order of convergence for both coupling errors confirming the consistency of our approach. 

\begin{table}
    \caption{Coupling errors defined in~\eqref{eq:couplingerrors} under grid refinement in the experiments considering discontinuous $A_0$ and $E$.}\label{tab:CplErrors} \vspace{.5em}
  \footnotesize
  \centering
  \begin{tabular}{c | c c c c | c c c c }
    &\multicolumn{4}{c|}{discontinuous $A_0$ } & \multicolumn{4}{c}{discontinuous $E$} \\
    $N$ & $e_1^c$ & EOC & $e_2^c$ & EOC & $e_1^c$ & EOC & $e_2^c$ & EOC\\
    \hline
    50 & 48.265 &  & 2047.383 &  & 57.677 &  & 5155.836 & \\ 
    100 & 24.411 & 0.983 & 1006.360 & 1.025 & 21.565 & 1.419 & 1909.133 & 1.433\\
    200 & 12.150 & 1.007 & 502.089 & 1.003 & 11.098 & 0.958 & 980.020 & 0.962\\
    400 & 6.055 & 1.005 & 250.877 & 1.001 & 5.559 & 0.997 & 490.107 & 1.000\\
    800 & 3.022 & 1.003 & 125.394 & 1.001 & 2.781& 0.999 & 244.958 & 1.001\\
    1600 & 1.509 & 1.002 & 62.685 & 1.000 & 1.391 & 1.000 & 122.462 & 1.000\\
  \end{tabular}

\end{table}

\section{Conclusion}
In this work, we have introduced a new 1D modeling approach to simulate blood flow in the cardiovascular system. Deriving the Lax--Friedrichs scheme from a model relaxation allows for a simple way to couple vessels in a network without relying on the eigenstructure of the regarding flow models. We have provided a second order scheme extension and showcased an adaptation of the approach to extended pressure models accounting for viscoelasticity of the vessel wall. Our numerical tests have confirmed consistency, efficiency and grid convergence of the proposed technique. The method, in particular, facilitates the extension of the one-dimensional modeling of arterial networks taking into account modified flows due to e.g.,\ occlusions, torsions or endovascular surgery. In \cite{herty2024} out method is applied in the modeling of aspiration therapy. Future work will focus on employing the method in the detailed modeling of blood clots with the aim to enable further insights into hemodynamics of stroke patients and the development of efficient operation planning.       

\section*{Funding}
The authors thank the Deutsche Forschungsgemeinschaft (DFG, German Research Foundation) for the financial support under 320021702/GRK2326 (Graduate College Energy, Entropy, and Dissipative Dynamics), through SPP 2410 (Hyperbolic Balance Laws in Fluid Mechanics: Complexity, Scales, Randomness) within the Projects 526006304 and 525842915, through SPP 2311 (Robust Coupling of Continuum-Biomechanical In Silico Models to Establish Active Biological System Models for Later Use in Clinical Applications – Co-Design of Modelling, Numerics and Usability) within the Project 548864771 and though Project 461365406 (New traffic models considering complex geometries and data). Support by the ERS Open Seed Fund of RWTH Aachen University through project OPSF781 is also acknowledged.

\section*{Conflict of interest}
The authors declare there is no conflict of interest.

\bibliographystyle{abbrvurl} 
\bibliography{references.bib}
\appendix

\section{Boundary conditions of second order}\label{sec:boundary2}
To preserve the second order of approximation in space of our MUSCL approach \eqref{eq:muscl} also at the boundary we provide in this appendix a second order equivalent of condition  \eqref{eq:nonreflectingQ}.
  Therefore, we replace the first order difference in \eqref{eq:nonreflectingdifference} by a second order one. More specifically we propose to use a three point forward difference formula on the left boundary so that the condition 
  \begin{equation*}
    \mathbf l_1(\mathbf U_L)^T [4\mathbf F(\mathbf U_1) - 3\mathbf F(\mathbf U_L) -\mathbf F(\mathbf U_2)]  = 0
  \end{equation*}
  applies. Again, replacing the function evaluation with the variable $\mathbf{V}$ and using \eqref{eq:laxcurves} yields the boundary condition
  \begin{equation*}
    \begin{split}
      3\lambda \left( Q_1 - Q_L \right) + V_1^Q - V_2^Q = \left(\alpha \frac{Q_L}{A_L}+ \sqrt{\alpha (\alpha -1) \frac{Q_L^2}{A_L^2} + \frac{\beta}{2 \rho A_0} \sqrt{A_L}}~\right) \\
      \times \left( 3\lambda \left( A_1 - A_L \right) + V_1^A - V_2^A \right)
    \end{split}
  \end{equation*}
  for the system \eqref{eq:systemQ}.
  The corresponding boundary condition on the right boundary is derived using the analogue backward difference.

\section{Computation of the coupling data in the one-to-one coupling problem}\label{sec:algo}
In this appendix a procedure to solve the nonlinear system in Section~\ref{sec:onetoone} is described. For the sake of simplicity we assume $\alpha=1$. In the first step we eliminate the variables $\mathbf V_R$ and $\mathbf V_L$ in the system. To this end we exploit \eqref{eq:laxcurves} in \eqref{eq:VA} and \eqref{eq:VQ} and obtain
\begin{align}
  [\mathbf F^{\rmn{1}}(\mathbf U_N^{\rmn{1}})]_1+ \lambda \left(A_N^{\rmn{1}} -A_R \right) &= [\mathbf F^{\rmn{2}}(\mathbf U_1^{\rmn{2}})]_1 + \lambda \left( A_L - A_1^{\rmn{2}} \right), \label{eq:VAmod}\\
  [\mathbf F^{\rmn{1}}(\mathbf U_N^{\rmn{1}})]_2 + \lambda \left( Q_N^{\rmn{1}} - Q_R \right)&= \frac{\alpha}{2} \frac{Q_R^2}{A_R} - \frac{\alpha}{2} \frac{A_R Q_L^2}{A_L^2} + \frac{A_R}{A_L} ([\mathbf F^{\rmn{2}}(\mathbf U_1^{\rmn{2}})]_2+\lambda(Q_L-Q_1^{\rmn{2}})) \notag\\
                                        &\quad  +  \frac{1}{\rho} \Bigl(\frac{A_R}{A_L}  P(A_L; A_0^{\rmn{2}}, \beta^{\rmn{2}})  - P(A_R; A_0^{\rmn{1}}, \beta^{\rmn{1}}) \Bigr), \label{eq:VQmod} 
\end{align}
where in the numerical data near the interface and the flux function the corresponding edge is indicated by $\Rmn{1}$ or $\Rmn{2}$ and $[\mathbf v]_i$ refers to the $i$-th component of a given vector $\mathbf v$.

  It remains to solve the system given by \eqref{eq:contQ}, \eqref{eq:contpt}, \eqref{eq:VAmod} and \eqref{eq:VQmod} for the coupling data $A_L, Q_L, A_R, Q_R$; we note that all other variables are known parameters. To this end we first write $Q_L$ as function $Q_L(Q_R)=Q_R$ of $Q_R$ using \eqref{eq:contQ}, i.e.\ we replace $Q_L$ by $Q_R$ in the other equations. Next, \eqref{eq:VAmod} is used to rewrite $A_L$ as
  \begin{equation}
    A_L(A_R) = \frac{1}{\lambda}([\mathbf F^{\rmn{1}}(\mathbf U_N^{\rmn{1}})]_1 - [\mathbf F^{\rmn{2}}(\mathbf U_1^{\rmn{2}})]_1)+ A_N^{\rmn{1}} + A_1^{\rmn{1}} -A_R.
  \end{equation}
  These two new expressions are inserted into \eqref{eq:VQmod}, which results in a quadratic equation in $Q_R$ with coefficients depending only on $A_R$ that reads
  \begin{equation}\label{eq:Q}
		\begin{split}
                  \Bigl( \frac{1}{2} \frac{1}{A_R} - \frac{1}{2} \frac{A_R}{A_L(A_R)^2} \Bigr) Q_R^2  +  \bigl( \lambda + \frac{A_R}{A_L(A_R)} \lambda \bigr) Q_R  + \frac{A_R}{A_L(A_R)}([\mathbf F^{\rmn{2}}(\mathbf U_1^{\rmn{2}})]_2 - \lambda Q_1^{\rmn{2}}) \\
                  +  \frac{1}{\rho} \Bigl(\frac{A_R}{A_L} 
			P(A_L; A_0^{\rmn{2}}, \beta^{\rmn{2}}) - P(A_R; A_0^{\rmn{1}}, \beta^{\rmn{1}}) \Bigr) 
                  - [\mathbf F^{\rmn{1}}(\mathbf U_N^{\rmn{1}})]_2 - \lambda Q_N^{\rmn{1}}  =0.
		\end{split}
              \end{equation}
  If $A_L (A_R)=A_R$ the quadratic term in \eqref{eq:Q} vanishes and we have
              \[
                \begin{split}
                  Q^1_R(A_R) = \frac{1}{2\lambda} \left([\mathbf F^{\rmn{1}}(\mathbf U_N^{\rmn{1}})]_2 + \lambda Q_N^{\rmn{1}} - [\mathbf F^{\rmn{2}}(\mathbf U_1^{\rmn{2}})]_2 + \lambda Q_1^{\rmn{2}} \right)\\
                  -  \frac{1}{2\rho \lambda} \left(
                  P(A_L; A_0^{\rmn{2}}, \beta^{\rmn{2}}) - P(A_R; A_0^{\rmn{1}}, \beta^{\rmn{1}}) \right).
                \end{split}
              \]
  Otherwise, we obtain the two  solutions 
  \begin{equation*}
    \begin{split}
      Q_R^{1,2}(A_R) = - \frac{\lambda A_R A_L(A_R)}{A_L(A_R)-A_R} \pm \Biggl(\frac{\lambda^2 A_R^2 A_L(A_R)^2}{(A_L(A_R)-A_R)^2} + \frac{2 A_R A_L(A_R)^2}{A_L(A_R)^2 - A_R^2} \hspace{4em}\\
        \times \left( [\mathbf F^{\rmn{1}}(\mathbf U_N^{\rmn{1}})]_2 + \lambda Q_N^{\rmn{1}} - \frac{A_R}{A_L(A_R)}([\mathbf F^{\rmn{2}}(\mathbf U_1^{\rmn{2}})]_2 - \lambda Q_1^{\rmn{2}}) \right. \\
		\left.	 -\frac{1}{\rho} \Bigl(\frac{A_R}{A_L} P(A_L; A_0^{\rmn{2}}, \beta^{\rmn{2}}) - P(A_R; A_0^{\rmn{1}}, \beta^{\rmn{1}}) \Bigr)\right)\Biggr)^{1/2}.
		\end{split}
	\end{equation*}
Substituting either $Q^1_R(A_R)=Q_L$ or $Q^2_R(A_R)=Q_L$ into \eqref{eq:contpt} together with $A_L(A_R)$ gives rise to a nonlinear equation in $A_R$, which is solved numerically (e.g.\ using Newtons method). By re-substituting we then obtain the remaining coupling data. If multiple solutions to the system are obtained, we select the one that is closest to the trace data $\mathbf U_N^{\rmn{1}}$ and $\mathbf U_1^{\rmn{2}}$ in the sense of $L^1$-distance.
\end{document}